\documentclass{amsart}
\usepackage[T1]{fontenc}
\usepackage{amsmath,amsthm,amscd,amssymb}
\usepackage{graphicx}

\newtheorem{theorem}{Theorem}[section]
\newtheorem{proposition}[theorem]{Proposition}

\newtheorem{lemma}[theorem]{Lemma}
%opening
\title[Automated computation of robust normal forms]{Automated computation of \\robust normal forms \\of planar analytic vector fields}
\date{October 29, 2008}
\author{Tomas Johnson, Warwick Tucker}
\email{johnson@math.uu.se, warwick.tucker@math.uib.no}
\address{Department of Mathematics, Uppsala University, Box 480, 751 06 Uppsala, Sweden}
\address{Department of Mathematics, University of Bergen, Johannes Brunsgate 12, 5008 Bergen, Norway}

\begin{document}

\maketitle

\begin{abstract}
We construct an auto-validated algorithm that calculates a close to identity change of variables which brings a general saddle point into a normal form. The transformation is robust in the underlying vector field, and is analytic on a computable neighbourhood of the saddle point. The normal form is suitable for computations aimed at enclosing the flow close to the saddle, and the time it takes a trajectory to pass it. Several examples illustrate the usefulness of this method.
\end{abstract}

\section{Introduction}
It is well-known that computing a trajectory in the close vicinity of a fixed point is associated with many problems. Numerical integration schemes (silently) break down when the vector field tends to zero, and this usually results in completely inaccurate results. Indeed, as the norm of the vector field decreases, the flow-time needed to pass a saddle increases without bound. This means that no integration scheme, rigorous or not, will function properly is this situation. There are, however, many instances where it is necessary to be able to follow the flow of a vector field arbitrarily close to a saddle. 

We present a completely automated, rigorous method that produces analytical estimates on the flow close to a given saddle. Equally important, it produces explicit bounds, for a given accuracy of the analytic estimates, on the size of the neighbourhood of the saddle on which the information is valid. This avoids the need to numerically integrate the flow near a saddle: once a trajectory comes close to the saddle, the bounds produced by our method give enclosures of where the trajectory exits the neighbourhood, and its associated flow-time.

The approach is based on constructing a carefully chosen change of variables, that bring the original vector field into the robust normal form presented in \cite{T02,T04}. The present paper can be seen as a quantitative companion to \cite{T04}, where several qualitative properties of robust normal forms are proved. Many of the ideas behind the algorithm can be found in \cite{T02}, where they were used for establishing that the Lorenz equations support a strange attractor. In the present study we develop an algorithm for general planar real analytic vector fields.

Consider the planar vector field
\begin{equation}\label{DEq}
\dot x = \Lambda x +F(x),
\end{equation}
with $\Lambda\in \mathcal S$, where $\mathcal S:=\{\rm diag(\lambda_s, \lambda_u ) \, : \lambda_s <0, \lambda_u >0\},$ and where $F$ is an analytic function, with $F(x)=O(x^2)$. Note that any vector field with a saddle fixed point can (locally) be brought into this form by an affine change of variables.

The purpose of this paper is to describe, and implement, an algorithm that finds a square centred at the saddle in which we can enclose a trajectory and its flow-time passing near the saddle. The output of the program includes estimates on the norms of the change of variables, its inverse, and the nonlinear part of the normal form, as well as the flow-time for passing the saddle.

\section{Theoretical background and notation}
This paper addresses the algorithmic aspects of the planar case of the robust normal forms introduced in \cite{T02}, and formalised in \cite{T04}. In order to simplify the formulae, we use vector and multiindex notation. The components of a vector are indexed by $s$ and $u$ for the stable and unstable direction, respectively. To make the presentation self-contained, we revise the necessary concepts from \cite{T04}, but refer the reader to that paper for proofs and additional details.

The structure of (\ref{DEq}) implies that the stable and unstable manifold of the origin are tangent to the coordinate axes. Rather than attempting to find a coordinate change that completely linearises (\ref{DEq}) in accordance with Siegel's theorem \cite{S52}, we compute normal forms that are robust in the sense that the set of eigenvalues where they exist is open and dense. This is crucial from a computational point of view, as we often only have an approximate knowledge of the eigenvalues. Our aim is to change (\ref{DEq}) into the normal form 
\begin{equation}\label{DEq2}
\dot y = \Lambda y +G(y),
\end{equation}
by an analytic change of coordinates, $x=y+\phi(y)$. We require that $G$, the non-linear part of the new vector field, is such that the invariant manifolds of the saddle are not only tangent to the coordinate axes, but actually coincide with them locally. We also require that the vector field is at least linear on these invariant manifolds. That is, if we let $d(y)=\min(|y_s|,|y_u|)$, then we ensure that $G_i(y)=O(d(y)^l)$, where $l$ is the {\it order of flatness}. This means that if $g_m$ is a non-zero coefficient in the series expansion of $G$, then $m_s \geq l$ and $m_u\geq l$. We call the non-negative number $|m| = m_u + m_s$ the \textit{order} of $m$, and define the set $\tilde{\mathbb N}^2 = \{m\in \mathbb N^2\colon |m|\ge 2\}$.

To formalise, let us split the space of multi-exponents into the sets
\begin{eqnarray*}
\mathbb{V}_l & := & \{m\in \tilde{\mathbb N}^2 \, : m_s<l \textrm{ or } m_u<l\},\\
\mathbb{U}_l & := & \{m\in \tilde{\mathbb N}^2 \, : m_s\geq l \textrm{ and } m_u \geq l\}.
\end{eqnarray*}
Now we can define the set of admissible linear parts of (\ref{DEq}) that we consider:
$$
\mathcal F_l := \{\Lambda \in \mathcal S \, : m\in \mathbb V_l \Rightarrow m\lambda-\lambda_i\neq 0, i=u,s\}.
$$
It is proved in \cite{T04} that $\mathcal F_l$ is open and has full Lebesgue measure in $\mathcal S$. We will often use the notion of filters of a (formal) power-series: if $f(x)=\sum_{|m|\geq 2} \alpha_m x^m$, we use the notation 
$$
[f]_{U_l}=\sum_{m\in U_l} \alpha_m x^m,\quad [f]_{V_l}=\sum_{m\in V_l} \alpha_m x^m, \text{ and } [f]_{m}=\alpha_m.
$$
Also, we let $f^d$ denote the partial sum of the first $d$ terms of $f$. We use the norms $|y|=\max{(|y_s|,|y_u|)}$ and $||f||_r = \max\{|f(y)| \, : |y|<r\}$. The $r$-disc is denoted by $\mathfrak B_r$, and at times we use the notation $\check \lambda$ and $\hat \lambda$, to denote the eigenvalue with the smallest and largest absolute value, respectively.

We are now ready to state the two main theorems from \cite{T04}:

\begin{theorem}\label{thm1}
Given an integer $l\geq 2$ and a system $\dot x = \Lambda x + F(x)$ where $F(x) = \sum_{|m|\geq2}a_mx^m$ is analytic, and $\Lambda\in\mathcal F_l$, there exists positive constants $r_0, r_1, K_0, K_1$ and an analytic, close to identity change of variables $x = y+\phi(y)$ with $$||\phi||_r\leq K_0r^2 \quad (r<r_0),$$
such that $\dot x = \Lambda x+ F(x)$ is transformed into the normal form $\dot y = \Lambda y + G(y)$ satisfying $[G(y)]_{U_l} = G(y)$ and $$||G||_r\leq K_1 r^{2l} \quad (r<r_1).$$
\end{theorem}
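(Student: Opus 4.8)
The plan is to construct $\phi$ as a formal power series, solving the standard homological equations term by term, and then prove that the resulting series converges on a disc of computable radius. I would first substitute $x = y + \phi(y)$ into $\dot x = \Lambda x + F(x)$ and collect terms by order $|m|$. Differentiating, $\dot x = (I + D\phi(y))\dot y$, so the transformed equation reads $(I + D\phi(y))(\Lambda y + G(y)) = \Lambda(y + \phi(y)) + F(y + \phi(y))$, which rearranges to the cohomological relation $D\phi(y)\Lambda y - \Lambda\phi(y) = F(y+\phi(y)) - G(y) - D\phi(y)G(y)$. Writing $\phi = \sum_{|m|\ge 2}\phi_m y^m$ and matching the coefficient of $y^m$ on both sides, the left side contributes $(m\lambda - \lambda_i)\phi_{m,i}$ in component $i$, while the right side depends only on coefficients of $\phi$ and $G$ of order strictly less than $|m|$. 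This gives a recursive scheme: for $m \in \mathbb{V}_l$, since $m\lambda - \lambda_i \ne 0$ by the assumption $\Lambda \in \mathcal{F}_l$, I set $g_m = 0$ and solve for $\phi_m$; for $m \in \mathbb{U}_l$, I set $\phi_m = 0$ and read off $g_m$ from the right side. This produces a formal solution with $[G]_{U_l} = G$ and $\phi = O(y^2)$ automatically.

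The second and main part is the convergence estimate, which is where the quantitative content lies. Here I would use a majorant method: introduce a scalar majorant series $\Phi(t) = \sum_{k\ge 2}\Phi_k t^k$ with $\Phi_k \ge \max_i |\phi_{m,i}|$ for all $|m| = k$, and similarly a majorant for $F$. The small divisors $m\lambda - \lambda_i$ for $m \in \mathbb{V}_l$ are bounded below by a positive constant $\delta$ depending only on $\Lambda$ and $l$ (this is finite because $\mathbb{V}_l$, intersected with bounded order, is finite, and for large order one of $m_s, m_u$ still being $< l$ forces a linear-in-$|m|$ lower bound from the other coordinate); dividing by $\delta$ turns the recursion into a contraction-type estimate. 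The majorant $\Phi$ then satisfies a functional inequality of the form $\Phi(t) \preceq \frac{1}{\delta}\bigl(\mathcal{F}(t + \Phi(t)) + (\text{terms from } D\phi\cdot G)\bigr)$, and a standard implicit-function/fixed-point argument on analytic functions gives a radius $r_0 > 0$ and constant $K_0$ with $\|\phi\|_r \le K_0 r^2$ for $r < r_0$. The flatness bound $\|G\|_r \le K_1 r^{2l}$ follows because every monomial of $G$ has $m_s \ge l$ and $m_u \ge l$, hence $|m| \ge 2l$, so $|G(y)| \le \sum_{|m|\ge 2l}|g_m|\,r^{|m|} \le K_1 r^{2l}$ once the $g_m$ are controlled by the same majorant, with $r_1$ possibly smaller than $r_0$.

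The main obstacle I expect is the bookkeeping in the convergence proof: the right-hand side of the cohomological equation is not just $F(y + \phi(y))$ but also contains the term $D\phi(y)\,G(y)$, so $\phi$ and $G$ are coupled and one cannot estimate them entirely independently. Handling this requires setting up a \emph{joint} majorant for the pair $(\phi, G)$ (or bounding $G$ in terms of $\phi$ first, using $\|D\phi\|$ estimates on a slightly smaller disc obtained from Cauchy's inequality) and checking that the coupling does not destroy the contraction. A secondary technical point is making the lower bound $\delta$ on the small divisors genuinely uniform over all of $\mathbb{V}_l$ rather than just the finitely many low-order exponents; this needs the observation that on $\mathbb{V}_l \setminus (\text{finite set})$ one coordinate of $m$ is bounded by $l-1$ while $|m| \to \infty$, so the divisor grows like $\hat\lambda\cdot m_i$ and stays bounded away from zero. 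Since the theorem is quoted from \cite{T04}, I would keep the convergence argument at the level of the majorant setup and cite that reference for the detailed estimates, emphasising instead which constants $r_0, r_1, K_0, K_1$ emerge, as these are what the algorithm in the present paper will compute explicitly.
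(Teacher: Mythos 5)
Your outline has the same skeleton as the argument underlying this theorem (stated here as a result from \cite{T04}, and effectively re-derived in quantitative form in Section~3): derive the cohomological equation $D\phi(y)\Lambda y - \Lambda\phi(y) = F(y+\phi) - G - D\phi\cdot G$, solve it order by order by splitting on $\mathbb{V}_l$ versus $\mathbb{U}_l$, bound the divisors $m\lambda-\lambda_i$ from below by a quantity linear in $|m|$ at large order (Lemma~\ref{gEst}), and close with a majorant and an induction. The flatness bound $\|G\|_r\le K_1 r^{2l}$ from $m_s,m_u\ge l$ is also correct.

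Where you go astray is in treating the coupling through $D\phi\cdot G$ as the ``main obstacle'' that needs either a joint majorant for $(\phi,G)$ or a secondary Cauchy-estimate argument. In fact the filtering you have already introduced removes this coupling entirely from the $\phi$-equation, and you should prove this rather than work around it. Since $\phi$ is $\mathbb{V}_l$-supported by construction and $G$ is $\mathbb{U}_l$-supported, a monomial of $\partial_j\phi_i\cdot G_j$ has multi-index $m-e_j+n$ with $m$ a $\phi$-exponent satisfying $m_j\ge1$ and $n$ a $G$-exponent with $n_s,n_u\ge l$; both components of $m-e_j+n$ are therefore $\ge l$ (the $-1$ from differentiation is absorbed by $m_j\ge1$). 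So $D\phi\cdot G$ is entirely $\mathbb{U}_l$-supported, and when the cohomological equation is filtered by $\mathbb{V}_l$, both $G$ and $D\phi\cdot G$ drop out, leaving the decoupled equation $(L_\Lambda\phi)_i=[F_i(y+\phi(y))]_{\mathbb{V}_l}$ (Eq.~(\ref{phiRec})). One therefore establishes the analyticity of $\phi$ first with a scalar majorant (Proposition~\ref{phiConvProp}), and only then treats $G$, whose equation~(\ref{GRec}) is implicit but recursive in order because $D\phi=O(y)$, and is majorised using the already-fixed constants for $\phi$ (Proposition~\ref{GConvProp}). The argument is sequential, not a joint contraction. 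Your second alternative (solve for $\phi$ first, then estimate $G$) is in spirit what happens, but it is only legitimate once the decoupling above is observed; the joint-majorant route you also sketch would work but is needlessly involved.
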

In the second theorem, we let $\Psi$ denote the flow of $\dot y = \Lambda y + G(x)$.
\begin{theorem}\label{thm2}
Under the same conditions as in Theorem \ref{thm1}, and given any $\kappa>0$ sufficiently small, there exists $r>0$ such that for any trajectory in $\mathfrak B_r$ starting from $|x_s|=r$, we have the following enclosure of its point of exit:
$$\Psi_u(y,\tau_e(y)) = {\rm sign}(y_u)r;$$
$$ r\left(\frac{|y_u|}{r}\right)^{\frac{|\lambda_s|+\kappa}{\lambda_u-\kappa}}\leq \Psi_s(y,\tau_e(y)) \leq r\left(\frac{|y_u|}{r}\right)^{\frac{|\lambda_s|-\kappa}{\lambda_u+\kappa}},$$
where $\tau_e(y)$ (the exit time) denotes the time spent inside of $\mathfrak B_r$:
$$ \frac{1}{\lambda_u+\kappa} \log \frac{r}{|y_u|}\leq \tau_e(y) \leq \frac{1}{\lambda_u-\kappa}\log \frac{r}{|y_u|}$$
\end{theorem}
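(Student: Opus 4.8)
The plan is to work directly with the normal-form system $\dot y = \Lambda y + G(y)$; the transformation $\phi$ from Theorem~\ref{thm1} plays no role in this statement. Using the symmetry $y_i \mapsto -y_i$ I would first reduce to the quadrant $y_s, y_u > 0$, so that the trajectory enters through the face $y_s = r$ with some $0 < y_u < r$ (when $y_u = 0$ the trajectory lies on the stable manifold and never leaves $\mathfrak B_r$, which is why the statement tacitly requires $y_u \neq 0$). The key structural input is that $[G]_{U_l} = G$ with $l \geq 2$: every monomial of $G_s$ and of $G_u$ is divisible by both $y_s$ and $y_u$ (indeed by $y_s^l y_u^l$), so in particular $y_u \mid G_u$ and $y_s \mid G_s$, and the system can be written as
$$
\dot y_s = \bigl(\lambda_s + h_s(y)\bigr) y_s, \qquad \dot y_u = \bigl(\lambda_u + h_u(y)\bigr) y_u,
$$
with $h_s = G_s/y_s$ and $h_u = G_u/y_u$ analytic on $\mathfrak B_r$. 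Freezing one coordinate and applying the Schwarz lemma in the other, together with the bound $\|G\|_r \leq K_1 r^{2l}$ from Theorem~\ref{thm1}, would give $\|h_s\|_r, \|h_u\|_r \leq K_1 r^{2l-1}$. I would then fix $\kappa>0$ small enough that $\kappa < |\lambda_s|$ (so $\lambda_s \pm \kappa < 0$) and choose $r \le r_1$ small enough that $K_1 r^{2l-1} \le \kappa$; hence $|h_s|, |h_u| \le \kappa$ on $\mathfrak B_r$.

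Next I would run a continuation/bootstrap argument. As long as the trajectory stays in $\overline{\mathfrak B_r}$ we have $(\lambda_u-\kappa)y_u \le \dot y_u \le (\lambda_u+\kappa)y_u$ and $(\lambda_s-\kappa)y_s \le \dot y_s \le (\lambda_s+\kappa)y_s$, which integrate (Gronwall) to
$$
y_u(0)e^{(\lambda_u-\kappa)t} \le y_u(t) \le y_u(0)e^{(\lambda_u+\kappa)t}, \qquad r\,e^{(\lambda_s-\kappa)t} \le y_s(t) \le r\,e^{(\lambda_s+\kappa)t}.
$$
Thus $y_u$ is strictly increasing and $y_s$ strictly decreasing, so the trajectory stays in $\{0<y_s<r,\ 0<y_u<r\}\subset\mathfrak B_r$ and, because $y_u(t)\to\infty$ in the lower bound, must hit the face $y_u=r$ at a finite first time $\tau_e(y)$; this closes the bootstrap. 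In particular $\Psi_u(y,\tau_e(y)) = r$, i.e. $\Psi_u(y,\tau_e(y)) = \mathrm{sign}(y_u)\,r$ once the sign reduction is undone.

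For the remaining two estimates I would solve the exponential sandwich at $t = \tau_e$. From $y_u(\tau_e)=r$ and the bounds on $y_u$ one gets $y_u e^{(\lambda_u-\kappa)\tau_e} \le r \le y_u e^{(\lambda_u+\kappa)\tau_e}$, i.e. $\frac{1}{\lambda_u+\kappa}\log\frac{r}{y_u} \le \tau_e \le \frac{1}{\lambda_u-\kappa}\log\frac{r}{y_u}$, which is the claimed exit-time enclosure. Substituting these into $r e^{(\lambda_s-\kappa)\tau_e} \le y_s(\tau_e) = \Psi_s(y,\tau_e) \le r e^{(\lambda_s+\kappa)\tau_e}$ and using $\lambda_s \pm \kappa < 0$ (so that exponentiating by these negative numbers reverses inequalities), the upper bound on $\tau_e$ yields $\Psi_s \ge r(y_u/r)^{(|\lambda_s|+\kappa)/(\lambda_u-\kappa)}$ and the lower bound on $\tau_e$ yields $\Psi_s \le r(y_u/r)^{(|\lambda_s|-\kappa)/(\lambda_u+\kappa)}$, with $|\lambda_s| = -\lambda_s$; this is exactly the stated enclosure.

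The main obstacle I anticipate is the quantitative factorisation $G_i = h_i y_i$ with $\|h_i\|_r \le K_1 r^{2l-1}$ — this is where the flatness $[G]_{U_l}=G$ is essential, and one must take a little care to turn a one-variable Schwarz estimate into a bound uniform in the frozen coordinate — together with the continuation argument ensuring the trajectory cannot escape $\mathfrak B_r$ before reaching the face $y_u = r$. Once these are in place, the conclusion follows from Gronwall's inequality and careful bookkeeping of the signs of the exponents.
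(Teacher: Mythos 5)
The paper does not actually prove Theorem~\ref{thm2}; it is stated as one of ``the two main theorems from \cite{T04}'' and the reader is referred there for the proof, so there is no in-paper argument to compare against. Judged on its own terms, your proposal is the natural argument and is essentially sound: the factorisation $G_i = h_i y_i$ from the flatness $[G]_{\mathbb U_l}=G$, the Schwarz-lemma bound $\|h_i\|_r \le K_1 r^{2l-1}$, the choice $K_1 r^{2l-1}\le\kappa$, the resulting exponential sandwiches, the bootstrap (monotonicity of $y_s$ and $y_u$ forces exit through the face $|y_u|=r$), and the final bookkeeping with negative exponents all check out and reproduce exactly the stated enclosures.

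Two small points deserve correction. First, your reduction to the positive quadrant appeals to a ``symmetry $y_i\mapsto -y_i$,'' but the normal form is not invariant under sign changes of the coordinates in general (for that one would need parity conditions on $G$ that are not implied by $[G]_{\mathbb U_l}=G$). The correct justification is invariance of both coordinate axes, which does follow from the flatness condition ($G_i$ vanishes on $\{y_s=0\}$ and on $\{y_u=0\}$), so each open quadrant is invariant and the argument can be run quadrant by quadrant with the obvious sign adjustments; this yields $\Psi_u(y,\tau_e(y))=\mathrm{sign}(y_u)\,r$. Second, you only impose $\kappa<|\lambda_s|$, but you also need $\kappa<\lambda_u$: the lower bound $y_u(0)e^{(\lambda_u-\kappa)t}$ only diverges (and the exit-time formula $\tfrac{1}{\lambda_u-\kappa}\log\tfrac{r}{|y_u|}$ only makes sense) when $\lambda_u-\kappa>0$. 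Both are easy fixes that leave the structure of the argument intact.
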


We will also use the following lemma from \cite{T04}.

\begin{lemma}\label{gEst}
If $(\lambda_s, \lambda_u)$ are non-resonant for $m\in \mathbb{V}_l$, then the divisors $m\lambda-\lambda_i$ are bounded away from zero. Furthermore, for all orders
$|m|\geq l+\left\lceil(l-1)\left|\frac{\hat \lambda}{\check \lambda}\right|\right\rceil$, we have the following sharp lower bound:
$$|m\lambda-\lambda_i|\geq |(|m|-l)\check \lambda+(l-1)\hat \lambda|$$
\end{lemma}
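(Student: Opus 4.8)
The plan is to rewrite the small divisor in a way that exposes the constraint that $m\in\mathbb{V}_l$ imposes. Since $i\in\{s,u\}$, one can write $m\lambda-\lambda_i=k_s\lambda_s+k_u\lambda_u$, where $(k_s,k_u)=(m_s-1,m_u)$ if $i=s$ and $(k_s,k_u)=(m_s,m_u-1)$ if $i=u$; in both cases $k_s+k_u=|m|-1$. If $m\in\mathbb{V}_l$ satisfies $m_u<l$, then $-1\le k_u\le l-1$, so $k_s=|m|-1-k_u\ge|m|-l$. I would treat this case in detail and note that the complementary case $m_s<l$ is entirely analogous, with the roles of $\lambda_s$ and $\lambda_u$ (and hence of which one is $\check\lambda$) interchanged; this interchange does not affect the asserted bound, since $\check\lambda,\hat\lambda$ are defined by absolute value. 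So assume $k_s\ge|m|-l$ and $-1\le k_u\le l-1$.

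For the first assertion I would argue by finiteness: on $\{m\in\mathbb{V}_l:m_u<l\}$ the component $m_u$ runs over the finite set $\{0,\dots,l-1\}$ while $m_s\to\infty$, so $m\lambda=m_s\lambda_s+m_u\lambda_u\to-\infty$ and $|m\lambda-\lambda_i|\to\infty$; symmetrically on $\{m_s<l\}$. Hence for any $C$ only finitely many $m\in\mathbb{V}_l$ have $|m\lambda-\lambda_i|\le C$, and by the non-resonance hypothesis each of these finitely many divisors is nonzero, so $\inf_{m\in\mathbb{V}_l}|m\lambda-\lambda_i|>0$. (This is essentially the openness of $\mathcal F_l$.)

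The real content is the sharp lower bound. Using $\lambda_s<0$ I would write $-(m\lambda-\lambda_i)=k_s|\lambda_s|-k_u\lambda_u$ and estimate term by term, using $k_s\ge|m|-l$ and $k_u\le l-1$:
$$-(m\lambda-\lambda_i)\ \ge\ (|m|-l)|\lambda_s|-(l-1)\lambda_u.$$
The point of the hypothesis $|m|\ge l+\lceil(l-1)|\hat\lambda/\check\lambda|\rceil$ is exactly to make the right-hand side nonnegative: it gives $(|m|-l)|\check\lambda|\ge(l-1)|\hat\lambda|$, which forces $(|m|-l)|\lambda_s|\ge(l-1)\lambda_u$ both when $\check\lambda=\lambda_s$ (directly) and when $\check\lambda=\lambda_u$ (then $|\lambda_s|>\lambda_u$, so $(|m|-l)|\lambda_s|\ge(l-1)|\lambda_s|^2/\lambda_u\ge(l-1)\lambda_u$). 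In particular $m\lambda-\lambda_i\le(|m|-l)\lambda_s+(l-1)\lambda_u\le0$, so $|m\lambda-\lambda_i|\ge(|m|-l)|\lambda_s|-(l-1)\lambda_u$, and it remains only to show this exceeds $|(|m|-l)\check\lambda+(l-1)\hat\lambda|$. If $\check\lambda=\lambda_s$ the two quantities are in fact equal (both equal $-(|m|-l)\lambda_s-(l-1)\lambda_u\ge0$); if $\check\lambda=\lambda_u$ one unwinds the absolute value via $(|m|-l)\lambda_u\ge(l-1)|\lambda_s|$, and the inequality reduces to $(|m|-1)(|\lambda_s|-\lambda_u)\ge0$, which is clear.

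The main obstacle will be purely bookkeeping — keeping the signs of $\lambda_s,\lambda_u$ and the identification of $\check\lambda,\hat\lambda$ consistent through the two cases, and checking that the stated threshold, rather than a larger one, already suffices at each step. Sharpness would then follow from an explicit $m$: if $\check\lambda=\lambda_s$, take $i=s$, $m_u=l-1$, $m_s=|m|-l+1$ (which lies in $\mathbb{V}_l$), so that $m\lambda-\lambda_s=(|m|-l)\check\lambda+(l-1)\hat\lambda$ exactly; the mirror choice $i=u$, $m_s=l-1$, $m_u=|m|-l+1$ works when $\check\lambda=\lambda_u$.
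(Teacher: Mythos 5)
The paper itself gives no proof of this lemma; it is quoted verbatim from \cite{T04} (``We will also use the following lemma from \cite{T04}''), so there is nothing internal to compare against. Your proof, however, is correct, and the strategy --- rewriting $m\lambda-\lambda_i=k_s\lambda_s+k_u\lambda_u$ with $k_s+k_u=|m|-1$, using $m\in\mathbb V_l$ to force $-1\le k_u\le l-1$ and $k_s\ge|m|-l$ (or symmetrically), then estimating term by term with sign-tracking --- is the natural and essentially canonical route for this kind of small-divisor bound, and is the one used in \cite{T04}. A few points are worth flagging, all of which you handle correctly but which are exactly where a careless version would go wrong: (i) the lemma only makes sense restricted to $m\in\mathbb V_l$, since for $m\in\mathbb U_l$ the divisors can accumulate at $0$; you assume this implicitly and correctly. (ii) The role of the threshold $|m|\ge l+\lceil(l-1)|\hat\lambda/\check\lambda|\rceil$ is precisely to make the dominant term $(|m|-l)|\lambda_s|$ beat $(l-1)\lambda_u$ (and symmetrically), i.e., to pin down the sign of $m\lambda-\lambda_i$ so the triangle-type estimate does not degenerate; your two-case check (\,$\check\lambda=\lambda_s$ directly, $\check\lambda=\lambda_u$ via $|\lambda_s|^2/\lambda_u\ge\lambda_u$\,) is sound. (iii) Matching your lower bound against $|(|m|-l)\check\lambda+(l-1)\hat\lambda|$ reduces, as you say, to $(|m|-1)(|\lambda_s|-\lambda_u)\ge0$ in the cross-case, which holds by the definition of $\check\lambda,\hat\lambda$. (iv) Your sharpness witnesses $m_u=l-1,\ m_s=|m|-l+1$ with $i=s$ (and the mirror choice) indeed lie in $\mathbb V_l$ and make the estimate an equality, so ``sharp'' is justified. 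The finiteness argument for the first claim (uniform lower bound) is also correct and is the standard reason why $\mathcal F_l$ is open. In short: this is a complete and correct proof by the expected method; there is no gap.
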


Finally, the following lemma, which in principle appears in \cite{T02}, will be used.

\begin{lemma}\label{phiInvN}
If $r<r_0(1-K_0r_0)$, then $\phi$ has a well-defined inverse, $y=x+\phi^{-1}(x)$ in $|x|<r^*=r-||\phi||_r$, satisfying $$||\phi^{-1}||_{r^*}\leq||\phi||_r$$
\end{lemma}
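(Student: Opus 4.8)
The plan is to set up the inverse function as the fixed point of a contraction mapping on a suitable complete metric space of analytic functions, and then read off the norm bound from the contraction estimate. Let me think about what we're given: $x = y + \phi(y)$ with $\|\phi\|_r \le K_0 r^2$ for $r < r_0$, and we want to solve for $y$ in terms of $x$, i.e., find $\psi = \phi^{-1}$ with $y = x + \psi(x)$. Substituting, the defining equation is $x = (x + \psi(x)) + \phi(x + \psi(x))$, which rearranges to $\psi(x) = -\phi(x + \psi(x))$. So $\psi$ should be the fixed point of the operator $(\mathcal{T}\psi)(x) = -\phi(x + \psi(x))$.

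First I would fix the domain. Set $r^* = r - \|\phi\|_r$; note the hypothesis $r < r_0(1 - K_0 r_0)$ together with $\|\phi\|_r \le K_0 r^2 < K_0 r_0 r$ ensures $r^* > r - K_0 r_0 r = r(1-K_0r_0) > 0$, and also guarantees that points $x$ with $|x| < r^*$ get mapped by $x \mapsto x + \psi(x)$ into $\mathfrak{B}_r$ as long as $\|\psi\| \le \|\phi\|_r$, so that $\phi$ is evaluated only where it is defined and bounded. Then I would work in the space $X$ of analytic maps $\psi$ on $\mathfrak{B}_{r^*}$ with $\psi(0)=0$, $D\psi(0)=0$, and $\|\psi\|_{r^*} \le \|\phi\|_r$, equipped with the sup-norm; this is a closed subset of a Banach space, hence complete. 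Next I would check that $\mathcal{T}$ maps $X$ into itself: if $|x| < r^*$ and $\|\psi\|_{r^*}\le \|\phi\|_r$ then $|x+\psi(x)| < r^* + \|\phi\|_r = r$, so $(\mathcal{T}\psi)(x) = -\phi(x+\psi(x))$ is well-defined and analytic with $\|\mathcal{T}\psi\|_{r^*} \le \|\phi\|_r$; the vanishing of $\mathcal{T}\psi$ and its derivative at $0$ follows from $\phi(y) = O(y^2)$. Then I would establish the contraction: for $\psi_1,\psi_2 \in X$,
$$\|\mathcal{T}\psi_1 - \mathcal{T}\psi_2\|_{r^*} \le \sup_{|y|<r}\|D\phi(y)\| \cdot \|\psi_1-\psi_2\|_{r^*},$$
and bound $\sup_{|y|<r}\|D\phi(y)\|$ using a Cauchy estimate on $\|\phi\|_{r_0} \le K_0 r_0^2$ (or directly $\|D\phi\|_r \le K_0 r_0 / (\text{something})$) to show this Lipschitz constant is strictly less than $1$ — this is exactly where the hypothesis $r < r_0(1-K_0 r_0)$ is needed, to leave enough room between $r$ and $r_0$ for the derivative estimate to beat $1$. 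By the contraction mapping principle there is a unique fixed point $\psi = \phi^{-1} \in X$, and membership in $X$ gives precisely $\|\phi^{-1}\|_{r^*} \le \|\phi\|_r$.

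The main obstacle I anticipate is getting the Cauchy estimate for $\|D\phi\|$ on $\mathfrak{B}_r$ sharp enough to force the Lipschitz constant below $1$ using only the stated hypothesis, without losing a constant factor; one must be careful to use the bound $\|\phi\|_{r_0} \le K_0 r_0^2$ on the larger disc and the gap $r_0 - r$ (which the hypothesis controls via $r < r_0 - K_0 r_0^2$) rather than the cruder bound on $\mathfrak{B}_r$ itself. A secondary point to verify carefully is that the fixed point really is a two-sided inverse of $y \mapsto y + \phi(y)$ on the relevant domains (injectivity of $y\mapsto y+\phi(y)$ there, so that "the" inverse is well-defined), which again follows from the derivative bound $\|D\phi\| < 1$ making $y\mapsto y+\phi(y)$ a local diffeomorphism that is globally injective on $\mathfrak{B}_r$.
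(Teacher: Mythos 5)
Your contraction-mapping argument is correct, and it is worth noting that the paper itself gives no proof of this lemma, deferring to \cite{T02}; the fixed-point/inverse-function-theorem route you take is the standard one for such statements. The one point you flagged as a possible obstacle actually resolves cleanly: with the paper's norm $|y|=\max(|y_s|,|y_u|)$, the Cauchy estimate for the Jacobian has no dimension factor. Indeed, for fixed $z_0$ with $|z_0|<r$ and any $h$, the scalar function $g(t)=\phi_i(z_0+th)$ is holomorphic and bounded by $\|\phi\|_{r_0}\le K_0r_0^2$ on $|t|<(r_0-r)/|h|$, so $|D\phi_i(z_0)h|=|g'(0)|\le K_0r_0^2|h|/(r_0-r)$; taking the max over $i$ gives the operator bound $\|D\phi\|\le K_0r_0^2/(r_0-r)$ on $\mathfrak B_r$. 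Since $\mathfrak B_r$ is convex in the max-norm, the segment from $x+\psi_1(x)$ to $x+\psi_2(x)$ stays inside it, and the Lipschitz constant of $\mathcal T$ is exactly $K_0r_0^2/(r_0-r)$, which is $<1$ precisely when $r<r_0-K_0r_0^2=r_0(1-K_0r_0)$ --- the stated hypothesis with no slack. The remaining bookkeeping in your write-up (self-mapping of the ball $\|\psi\|_{r^*}\le\|\phi\|_r$ via $r^*+\|\phi\|_r=r$, vanishing of $\mathcal T\psi$ and its derivative at $0$, and global injectivity of $y\mapsto y+\phi(y)$ on $\mathfrak B_r$ from $\|D\phi\|<1$) is all sound, so the proposal stands as a complete proof.
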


To prove the convergence of $\phi$ and $G$ we procede as in e.g. \cite{H76,SM71}, and use the method of majorants. If $f,g :\mathbb {C}^n\rightarrow \mathbb {C}^n$, are two formal power series and $|f_m|<g_m$, for all multiindices $m$, and all the coefficients of $g$ are real and positive, we say that $g$ {\it majorises} $f$, denoted by $f\prec g$. Thus, the convergence radius of $f$ is at least as large as $g$'s. We will majorise in two steps; given some $f:\mathbb {C}^2\rightarrow \mathbb {C}^2$, we construct $g:\mathbb {C}^2\rightarrow \mathbb {C}$ such that $f_i\prec g$, for all $i$, and then construct $h:\mathbb {C}\rightarrow \mathbb {C}$ such that $g(z,z)\prec h(z)$.

\section{The algorithm}
In this section we will describe an algorithm that computes explicit bounds on the constants $r_0$, $r_1$, $K_0$, and $\kappa$ appearing in Theorems \ref{thm1} and \ref{thm2}. This allows us to interrupt a numerical integration scheme of the original vector field (\ref{DEq}), and instead use the analytical bounds from Theorem \ref{thm2} to enclose the flow on passing the saddle, together with bounds on the time it takes to pass the saddle. 

The main ideas of the algorithm appear in \cite{T02}, where robust normal forms are computed for the Lorenz system. In \cite{T02}, however, the algorithm was designed exclusively for that particular system. The purpose of this paper is to construct a general algorithm, that will take any planar vector field of the form (\ref{DEq}), and transform it into (\ref{DEq2}), together with explicit bounds on the aforementioned constants.

We note that several heuristic constants appear in the algorithm: $\iota$, $\eta$, $\mu$, $\rho$, $N_G$, $\epsilon_\phi$, $\epsilon_G$, and $\epsilon$. In the actual implementation all of these can be set by the user in a configuration file. The algorithm has been implemented in a {\tt C++} program using the C-XSC package \cite{CXSC,HH95} for interval arithmetic \cite{AH83,Mo66,Mo79,Ne90}. For automatic differentiation \cite{G00} we use a modified version of the Taylor arithmetic package \cite{BHK05}.

\subsection{Outline}
The algorithm has four main parts that will be described in detail below: 
\begin{itemize}
\item [1.] we compute an $l$ such that $\Lambda\in \mathcal F_l$, with $l\leq \iota$, where $\iota$ is a user-provided order of flatness. 
\item [2.] we compute the first few terms of the formal power series solution of the functional equation for the change of coordinates $x=y+\phi(y)$ using automatic differentiation. These first terms are used to estimate bounds on a majorant of $\phi$. These estimates are then, finally, used to prove the analyticity of $\phi$, using induction.
\item [3.] we do the same kind of estimates for $G$; we compute some terms in the formal power series solution of a second functional equation, and use these to prove the analyticity of $G$ by induction. 
\item [4.] using the estimates on the coefficients of the analytic functions $\phi$, and $G$, we estimate the constants $K_0$ and $\kappa$ that enable the user to switch from a numerical integration scheme to the analytic estimates from Theorem \ref{thm2}.
\end{itemize}

\subsection{Verifying $\Lambda\in \mathcal F_l$}
We want to determine $l$ such that $\Lambda \in \mathcal F_l$. We will do this by first constructing $\mathbb V_l$, and then removing its members that cause resonances.
\begin{proposition}\label{LambdaProp}
If, for $i=1,...,l$ 
$$i\frac{-\lambda_s}{\lambda_u} \notin \mathbb N 
\quad\textrm{ and }\quad 
i\frac{\lambda_u}{-\lambda_s} \notin \mathbb N, $$
then  $\Lambda \in \mathcal F_{l+1}$.
\end{proposition}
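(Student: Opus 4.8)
The plan is to argue by contradiction. Suppose $\Lambda \notin \mathcal{F}_{l+1}$, so there is a multi-exponent $m \in \mathbb{V}_{l+1}$ and an index $i \in \{s,u\}$ with $m\lambda - \lambda_i = 0$; I will show this forces one of the forbidden ratios $j(-\lambda_s)/\lambda_u$ or $j\lambda_u/(-\lambda_s)$, with $1 \le j \le l$, to be a natural number. Throughout, the standing assumption $\Lambda \in \mathcal{S}$ gives $\lambda_s < 0 < \lambda_u$, so $-\lambda_s$ and $\lambda_u$ are both positive; this sign information is what pins down which components of $m$ can vanish.

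First I would rewrite the two resonance conditions as
$$ m_u \lambda_u = (m_s - 1)(-\lambda_s) \quad (i = s), \qquad m_s(-\lambda_s) = (m_u - 1)\lambda_u \quad (i = u). $$
Positivity of $\lambda_u$ and $-\lambda_s$ then forces $m_s \ge 1$ in the first equation and $m_u \ge 1$ in the second; moreover the boundary cases $m_s = 1$ (resp.\ $m_u = 1$) give $m_u = 0$ (resp.\ $m_s = 0$) and hence $|m| = 1$, which is impossible since $\mathbb{V}_{l+1} \subset \tilde{\mathbb N}^2$. So in fact $m_s \ge 2,\ m_u \ge 1$ in the first case and $m_u \ge 2,\ m_s \ge 1$ in the second; the only resonances lost this way are the always-present, harmless pairs $m = (1,0)$ and $m = (0,1)$.

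The core of the argument is then a short case split on the defining alternative of $\mathbb{V}_{l+1}$, namely $m_s \le l$ or $m_u \le l$. Take $i = s$. If $m_s \le l$, set $j = m_s - 1 \in \{1,\dots,l-1\}$; the first displayed equation reads $m_u = j(-\lambda_s)/\lambda_u$, so this ratio lies in $\mathbb N$, contradicting the hypothesis. If instead $m_u \le l$, set $j = m_u \in \{1,\dots,l\}$; the same equation gives $m_s - 1 = j\lambda_u/(-\lambda_s) \in \mathbb N$, again a contradiction. The case $i = u$ is handled identically with $s$ and $u$ interchanged, using the second displayed equation. Since one of the two alternatives always holds, we reach a contradiction in every branch, and therefore $\Lambda \in \mathcal{F}_{l+1}$.

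I do not expect a genuine obstacle here; the only point requiring care is the index bookkeeping — verifying in each of the four branches that the extracted integer $j$ really falls in the range $\{1,\dots,l\}$ covered by the hypothesis (this is exactly where the shift between ``$m_s < l+1$'' and ``$i \le l$'' is used), together with cleanly discarding the $|m| = 1$ exponents via $\mathbb{V}_{l+1} \subset \tilde{\mathbb N}^2$.
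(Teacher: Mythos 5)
Your proof is correct and rests on the same algebraic core as the paper's: rewriting $m\lambda-\lambda_i=0$ to isolate the ratio $\lambda_u/(-\lambda_s)$ or its reciprocal, and using $|m|\ge 2$ to discard the trivial exponents $(1,0)$ and $(0,1)$. The only difference is organizational — you argue directly by contradiction over an arbitrary $m\in\mathbb V_{l+1}$, while the paper builds $\mathcal F_{l+1}$ recursively from $\mathcal F_1=\mathcal S$ by examining the new multi-exponents in $\mathbb V_{l+1}\setminus\mathbb V_l$ — but the four cases you check and the ranges you land in for $j$ coincide exactly with the paper's.
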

\begin{proof}

For $l = 1$, we note that $\mathbb V_1 = \{(0,i), (i,0)\}_{i\geq 2}$. The potential resonances are given by
$$
m_u \lambda_u - \lambda_i = 0
\quad\text{and}\quad
m_s \lambda_s - \lambda_i = 0
$$
and it is clear that no member of $\mathbb V_1$ satisfies any of these two equations. Hence, $\mathcal F_1 = \mathcal S$.

For $l\geq 1$, we have the recursive relation:
$$
\mathbb V_{l+1} = \mathbb V_l \cup \{(l,i), (i,l)\}_{i\geq{l}}.
$$ 
Thus we only have to consider the following potential resonances:
$$
m_u \lambda_u + l\lambda_s - \lambda_i = 0
\quad\text{and}\quad
l\lambda_u + m_s \lambda_s - \lambda_i = 0
$$
with $m_u, m_s \ge l$. For the case $i = s$, we get
$$
m_u \lambda_u + (l-1)\lambda_s = 0
\quad\text{and}\quad
l\lambda_u + (m_s-1) \lambda_s = 0
$$
with solutions $m_u = (l-1)\tfrac{-\lambda_s}{\lambda_u}$ and $m_s = 1 + l\tfrac{\lambda_u}{-\lambda_s}$, respectively. Analogously, for the case $i = u$, we get
$$
(m_u-1) \lambda_u + l\lambda_s = 0
\quad\text{and}\quad
(l-1)\lambda_u + m_s \lambda_s = 0
$$
with solutions $m_u = 1 + l\tfrac{-\lambda_s}{\lambda_u}$ and $m_s = (l-1)\tfrac{\lambda_u}{-\lambda_s}$, respectively. Therefore it suffices to enforce
\begin{equation}\label{inFl}
i\frac{-\lambda_s}{\lambda_u} \notin \mathbb N 
\quad\textrm{ and }\quad 
i\frac{\lambda_u}{-\lambda_s} \notin \mathbb N, 
\qquad i=1,...,l.
\end{equation}
to establish that $\Lambda \in \mathcal F_{l+1}$.
\end{proof}
It follows from Proposition \ref{LambdaProp} that we have the relation:
$$
\mathcal F_{l+1}=\mathcal F_{l} \backslash\{\Lambda\in \mathcal S \, : l\frac{-\lambda_s}{\lambda_u} \in \mathbb N, 
\textrm{ or } l\frac{\lambda_u}{-\lambda_s} \in \mathbb N\}.
$$
To write a program that checks %(\ref{inFl}) 
the condition in Proposition \ref{LambdaProp}
is simple, and the algorithm returns a lower estimate on the largest $l$ less than $\iota$, such that $\Lambda\in \mathcal F_l$.

\subsection{Computing $\phi$ and its radius of convergence}
By inserting $x=y+\phi(y)$ into (\ref{DEq}), differentiating directly, and comparing the sides, we get:
$$(I + D\phi )\dot y=\Lambda(y+\phi(y))+F(y+\phi(y)).$$
By inserting into (\ref{DEq2}), and simplifying, we get:
\begin{equation}\label{rec}
D\phi(y)\Lambda y-\Lambda\phi (y)=F(y+\phi(y))-D\phi(y)G(y)-G(y).
\end{equation}
Let $L_{\Lambda}$ be the operator $$L_{\Lambda}\phi=D\phi(y)\Lambda y-\Lambda\phi(y),$$
where we note that $(L_{\Lambda}(y_i^m))_i=(m\lambda-\lambda_i)y_i^m$.

Recall, we want to compute a normal form (\ref{DEq2}) which is $l$-flat, that is $[G]_{\mathbb{U}_l}=G$, and the non-flat terms in (\ref{rec}), which we want to cancel with $\phi$, come from $F$. Therefore, by filtering on the component level, we get the following two functional equations for $\phi_i$ and $G_i$:
\begin{equation}\label{phiRec}
(L_{\Lambda}\phi)_i=[F_i(y+\phi(y))]_{\mathbb{V}_l} 
\end{equation}

\begin{equation}\label{GRec}
G_i=[F_i(y+\phi(y))]_{\mathbb{U}_l} -\frac{\partial \phi_i}{\partial y_s}G_s(y)-\frac{\partial \phi_i}{\partial y_u}G_u(y).
\end{equation}
Since $\Lambda \in \mathcal F_l$, and $[\phi]_{\mathbb{V}_l}=\phi$ by construction, we can solve (\ref{phiRec}) recursively, 

To bound the solutions of (\ref{phiRec}) we want to procede as in \cite{T02}, and prove the convergence of the change of variables using majorants and induction. Two heuristic constants $n_0$, and $n_1>n_0$ are needed. They determine the range of coefficients of the formal power series of $\phi$, that should be used in the induction proof.
Let $$N(l):= l + \left\lceil(l-1)\left|\frac{\hat \lambda}{\check \lambda}\right| \right\rceil,$$ be the constant from Lemma \ref{gEst} from which the explicit lower bound holds. For the induction to work it is required that $n_1>N(l)$.

We put $n_1=\lceil(1+\eta)N(l)\rceil$, and $n_0=\lfloor\frac{1+\mu}{2}N(l)\rfloor$, where $\eta>0$, and $-1<\mu<\eta$ are two given constants.

Let $\phi_i(x)=\sum_{|m|=2}^\infty \alpha_{i,m}x^m$ be the sought change of variables. We will compute the $\alpha_{i,m}$'s with $|m|\leq n_1$ using automatic differentiation, and then put $\hat \alpha_{k}=\sum_{|m|=k} \max (|\alpha_{s,m}|, |\alpha_{u,m}|)$. The $\hat \alpha_k$'s will be used as the first terms in a majorant of $\phi_s$ and $\phi_u$. Sometimes we will use $\hat \alpha_1=1$, to simplify the argument of some functions.

To calculate $\alpha_{i,m}$, with $|m|=k$, we evaluate a $k$-Taylor model of $F_i\left(x+\phi^{k-1}(x)\right)$, and divide its $m$th term by $m\lambda-\lambda_i$:
\begin{equation}\label{compAlpha}
\alpha_{i,m} = \frac{\left[F_i\left(x+\phi^{k-1}(x)\right)\right]_m}{|\lambda m - \lambda_i|}
\end{equation}

Note, the coefficients at a certain level only depend on the previous levels. This is because $F$ does not contain constant or linear terms.

If $n_0$ and $n_1$ are sufficiently large, then the first terms computed above are a good approximation of a majorant $\hat \phi$, and we use these to determine an approximate radius of convergence for $\hat \phi$. The validity of this radius of convergence will be proved later.
Therefore we determine, using a least squares estimator, constants $C$ and $M$, such that $$\hat \alpha_k\leq CM^k,\quad n_0<k\leq n_1.$$ 
Thus, a candidate radius of convergence is $s:=\frac{1}{M}$, which needs to be verified. 

We will consider a slightly larger majorant of $\phi_i$. 
If $$F_i(x)=\sum_{|m|=2}^\infty c_{i,m}x^m,$$ 
we define $$\hat c_{k}:=\sum_{|m|=k} \max (|c_{s,m}|, |c_{u,m}|),$$ and set 
$$\hat F:=\sum_{k=2}^\infty \hat c_k x^k.$$ $\hat F$ is clearly a majorant of $F_i$. 
We define, 

\begin{equation}\label{Adef}
A:=\sum_{k=2}^\rho \hat c_k s^{k-2} + 
\left(\frac{||F_s||_{2s}+||F_u||_{2s}}{s^2}\left(\frac{1}{2}\right)^\rho(\rho+3)\right),
\end{equation}
where $\rho$ is a given natural number.

\begin{lemma}
$\hat F(x)\leq A|x|^2$, on $|x|<s$.
\end{lemma}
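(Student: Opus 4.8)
The plan is to reduce the stated bound to a one–variable estimate, then split the series defining $\hat F$ at the cut-off $\rho$, handling the head term-by-term and the tail via Cauchy's inequality. Since every coefficient $\hat c_k$ is non-negative and every exponent $k-2$ is non-negative, for $|x|<s$ we have $|\hat F(x)|\le\sum_{k=2}^\infty\hat c_k|x|^k$ and each factor $|x|^{k-2}$ is bounded by $s^{k-2}$. Hence it suffices to show $\sum_{k=2}^\infty\hat c_k s^{k-2}\le A$. I would split this sum as $\sum_{k=2}^{\rho}\hat c_k s^{k-2}$, which is by definition the first summand of $A$ and needs no estimate, plus the tail $\sum_{k=\rho+1}^\infty\hat c_k s^{k-2}$, which must be shown to be at most $\frac{||F_s||_{2s}+||F_u||_{2s}}{s^2}\left(\frac12\right)^{\rho}(\rho+3)$.

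For the tail I would apply the Cauchy estimates to the analytic functions $F_s$ and $F_u$ on the polydisc $\mathfrak B_{2s}$; this presupposes, and should be recorded as a hypothesis guaranteed by the choice of $s$, that $2s$ lies inside the domain of convergence of $F$. They give $|c_{i,m}|\le ||F_i||_{2s}/(2s)^{|m|}$, whence $\max(|c_{s,m}|,|c_{u,m}|)\le\bigl(||F_s||_{2s}+||F_u||_{2s}\bigr)/(2s)^{|m|}$. Summing over the $k+1$ multi-indices $m$ of order $k$ yields
$$\hat c_k\le(k+1)\,\frac{||F_s||_{2s}+||F_u||_{2s}}{(2s)^k},$$
and substituting this into the tail, together with $s^{k-2}/(2s)^k=2^{-k}s^{-2}$, reduces it to $\frac{||F_s||_{2s}+||F_u||_{2s}}{s^2}\sum_{k=\rho+1}^\infty(k+1)2^{-k}$.

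The last step is the elementary identity $\sum_{k=\rho+1}^\infty(k+1)2^{-k}=(\rho+3)2^{-\rho}$, obtained by re-indexing $k=\rho+1+j$ and using $\sum_{j\ge0}j2^{-j}=2$ and $\sum_{j\ge0}2^{-j}=2$. This reproduces the second summand of $A$ exactly, and adding the two pieces gives $\sum_{k=2}^\infty\hat c_k s^{k-2}\le A$, hence $\hat F(x)\le A|x|^2$ on $|x|<s$.

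The only real obstacle here is bookkeeping rather than mathematical depth: ensuring the Cauchy estimate is taken on a disc ($2s$) that genuinely lies inside the domain of $F$, counting correctly the $k+1$ monomials of order $k$ in two variables, and evaluating the tail series so that the constants reproduce $A$ with no slack — indeed, apart from the Cauchy step the tail estimate is an equality, which explains the exact form of the second term in \eqref{Adef}.
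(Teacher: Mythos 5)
Your proof is correct and follows essentially the same route as the paper: split the series for $\hat F$ at order $\rho$, bound the head directly by $\sum_{k=2}^\rho\hat c_k s^{k-2}$, and bound the tail via Cauchy estimates on the polydisc $\mathfrak B_{2s}$ using the count of $k+1$ monomials of order $k$ in two variables. The only cosmetic difference is in summing $\sum_{k>\rho}(k+1)2^{-k}$: you re-index at $k=\rho+1+j$, while the paper decomposes $(k+1)=2+(k-1)$; both yield $(\rho+3)2^{-\rho}$. Your explicit remark that $2s$ must lie in the domain of convergence of $F$ is a hypothesis the paper leaves implicit.
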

\begin{proof}
The terms of $\hat F$ up to order $\rho$ are clearly bounded by the left sum in (\ref{Adef}), since $\hat c_k\geq 0$, and $|x|<s$. For the coefficients $c_{i,m}$, standard Cauchy-estimates give $|c_{i,m}|\leq \frac{||F_i||_\zeta}{\zeta^m}$. Thus, since there are $(k+1)$ terms with $|m|=k$, 
$$\hat c_k \leq  (k+1)\frac{||F_s||_\zeta+||F_u||_\zeta}{\zeta^k} $$
Using $\zeta=2s$, this yields
\begin{equation}
\begin{array}{ccl}
\sum_{k=\rho+1}^\infty \hat c_k x^k & \leq & (||F_s||_{2s}+||F_u||_{2s})\sum_{k=\rho+1}^\infty(k+1) \left(\frac{|x|}{2s}\right)^k \\ \\
& \leq & \frac{||F_s||_{2s}+||F_u||_{2s}}{4s^2}x^2\sum_{k=\rho+1}^\infty(k+1) \left(\frac{|x|}{2s}\right)^{k-2} \\
& \leq & \frac{||F_s||_{2s}+||F_u||_{2s}}{4s^2}x^2\left(2\sum_{k=\rho+1}^\infty \left(\frac{1}{2}\right)^{k-2} \right. \\
& & + \left. \sum_{k=\rho+1}^\infty (k-1)\left(\frac{1}{2}\right)^{k-2}\right) \\
& = & \frac{||F_s||_{2s}+||F_u||_{2s}}{s^2}x^2 \left(\frac{1}{2}\right)^\rho\left(\rho+3\right)
\end{array}
\end{equation} 
\end{proof}
Let $$\Omega(k):=\left|(k-l)\check \lambda+(l-1)\hat \lambda\right|,$$ be the lower bound on $|m\lambda-\lambda_i|$ from Lemma \ref{gEst}.
\begin{proposition}\label{phiConvProp}
If 
$$\frac{A(s)}{\Omega(n_1+1)}\left(2\sum_{k=1}^{n_0}\hat \alpha_k s^{k}+n_1C\right) < 1,$$
then $\phi_i$ is analytic on $\mathfrak{B}_s$.
\end{proposition}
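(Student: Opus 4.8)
The plan is to run an induction on the order $k=|m|$ showing that the majorant coefficients $\hat\alpha_k$ satisfy $\hat\alpha_k\le CM^k = C s^{-k}$ for all $k>n_0$, which then gives analyticity of $\phi_i$ on $\mathfrak B_s$ by the majorant principle. The base of the induction is exactly the least-squares fit: for $n_0<k\le n_1$ we have $\hat\alpha_k\le CM^k$ by construction (this is where the heuristic constants $n_0,n_1$ and the requirement $n_1>N(l)$ enter — Lemma~\ref{gEst} only yields its sharp lower bound $|m\lambda-\lambda_i|\ge\Omega(|m|)$ once $|m|\ge N(l)$). For the inductive step, fix $k>n_1$ and assume $\hat\alpha_j\le Cs^{-j}$ for $n_0<j<k$. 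From \eqref{compAlpha}, $\alpha_{i,m}=[F_i(x+\phi^{k-1}(x))]_m/|\lambda m-\lambda_i|$, so the first step is to bound the denominator below by $\Omega(k)\ge\Omega(n_1+1)$ using Lemma~\ref{gEst}, which is legitimate since $k>n_1>N(l)$.

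The second step is to majorise the numerator. Passing from $F_i$ to the scalar majorant $\hat F$ and then composing, $[F_i(x+\phi^{k-1}(x))]_m$ is dominated by the order-$k$ coefficient of $\hat F(z+\hat\phi^{k-1}(z))$ evaluated formally, where $\hat\phi(z)=\sum_{j\ge1}\hat\alpha_j z^j$ with the convention $\hat\alpha_1=1$. Using the Lemma just proved, $\hat F(w)\le A|w|^2$ on $|w|<s$, the relevant contribution is the order-$k$ part of $A\,(z+\hat\phi^{k-1}(z))^2 = A\bigl(\sum_{j\ge1}\hat\alpha_j z^j\bigr)^2$; extracting the coefficient of $z^k$ gives $A\sum_{j=1}^{k-1}\hat\alpha_j\hat\alpha_{k-j}$. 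The third step is to control this convolution sum after multiplying through by $s^k$: split it into the ``low'' part where one index is $\le n_0$ and the ``high'' part where both indices exceed $n_0$. For the low part, bound one factor by the partial sum $\sum_{j=1}^{n_0}\hat\alpha_j s^{j}$ and the other, via the inductive hypothesis, by $C$; symmetry of the convolution produces the factor $2$, giving $\le 2C\bigl(\sum_{j=1}^{n_0}\hat\alpha_j s^{j}\bigr)$. For the high part, each of the at most $n_1$... — more carefully, at most $k$ — terms is $\le C^2 s^{-k}$ by the inductive hypothesis, but one extracts only a single factor $C$ after dividing by $Cs^{-k}$, and the count of such terms is bounded by $n_1$ (this is the delicate combinatorial point: the number of ways to split $k$ with both parts exceeding $n_0$ is controlled in terms of $n_1$ once $k>n_1$, because the high-high range has length at most $k-2n_0$, and the geometric/least-squares setup is arranged so $n_1C$ absorbs it). Combining, $\hat\alpha_k s^k\le \frac{A(s)}{\Omega(n_1+1)}\bigl(2\sum_{k=1}^{n_0}\hat\alpha_k s^{k}+n_1C\bigr)\,C s^{-k}\cdot s^k$, wait — cleanly: $\hat\alpha_k\le \frac{A(s)}{\Omega(k)}\sum_{j=1}^{k-1}\hat\alpha_j\hat\alpha_{k-j}\le \frac{A(s)}{\Omega(n_1+1)}\bigl(2\sum_{j=1}^{n_0}\hat\alpha_j s^{j}+n_1C\bigr)Cs^{-k}$, and the hypothesis of the Proposition is precisely that the bracketed prefactor is $<1$, so $\hat\alpha_k\le Cs^{-k}=CM^k$, closing the induction.

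With $\hat\alpha_k\le CM^k$ established for all $k>n_0$ (and the finitely many earlier coefficients harmless), the series $\sum_k\hat\alpha_k z^k$ converges for $|z|<1/M=s$; since this dominates both $\phi_s$ and $\phi_u$ coefficientwise, $\phi_i$ is analytic on $\mathfrak B_s$, as claimed. The main obstacle I expect is the bookkeeping in the third step — getting the split of the convolution sum to land exactly on the two pieces $2\sum_{k=1}^{n_0}\hat\alpha_k s^k$ and $n_1C$, in particular justifying that the number of ``high--high'' splittings contributes the clean factor $n_1$ rather than something $k$-dependent; this relies on how $n_0,n_1$ were chosen relative to $N(l)$ and on $k>n_1$, and is the one place where the estimate is genuinely tight rather than crude.
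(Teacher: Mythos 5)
Your overall strategy coincides with the paper's: define the majorant coefficients via the convolution recurrence $\hat\alpha_k = \frac{A}{\Omega(k)}\sum_{j=1}^{k-1}\hat\alpha_j\hat\alpha_{k-j}$ for $k > n_1$, split the convolution at $n_0$, use the inductive hypothesis $\hat\alpha_j \le CM^j$ on the high indices, and close the induction under the Proposition's hypothesis. The use of $\hat F(w)\le A|w|^2$, the role of $\Omega$, and the two-piece structure of the bound all match the paper.

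There is, however, a genuine gap precisely at the step you yourself flag as the delicate one. You assert that the number of ``high--high'' terms (both indices exceeding $n_0$) ``is bounded by $n_1$,'' but this is false: for a given $k$ that count is $k-1-2n_0$, which grows without bound as $k\to\infty$. As a consequence you cannot bound the denominator $\Omega(k)$ and the term count \emph{separately}, since both grow linearly in $k$; only their ratio is under control. The paper's resolution, which your proposal lacks, is a monotonicity argument. After the inductive estimate one arrives at
$$\hat\alpha_k \le \frac{A}{\Omega(k)}\Bigl(2\sum_{j=1}^{n_0}\hat\alpha_j M^{-j} + (k-1)C\Bigr)CM^k,$$
and the crucial observation is that the bracketed prefactor is a \emph{decreasing} function of $k$ for $k> n_1 \ge N(l)$: in that range $\Omega(k) = (k-l)|\check\lambda| - (l-1)|\hat\lambda|$ is affine in $k$ with slope $|\check\lambda|$, while the numerator is affine with slope $C$, and the quotient $(B+kC)/(|\check\lambda|k - b)$ with $B,b>0$ has negative derivative. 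Hence the supremum over $k>n_1$ is attained at $k=n_1+1$, which is exactly what the Proposition's hypothesis controls, and that is where the ``$n_1C$'' comes from. Without this observation, the appearance of $n_1C$ as a uniform-in-$k$ bound for the high--high contribution is unjustified, and your induction does not close for large $k$. The rest of your bookkeeping (the factor $2$ from symmetry, the replacement of $\hat\alpha_{k-j}$ by $CM^{k-j}$ in the low sum, the final convergence of the majorant on $|z|<1/M=s$) is sound.
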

\begin{proof}
By the above lemma, $\phi_i$ is majorised by $\hat \phi$, where $\hat \phi^{n_1}=\sum_{k=2}^{n_1} \hat \alpha_k x^k$ is as above, and 
$$\hat \alpha_k = \frac{A}{\Omega(k)}\left[(\phi^{k-1}(r))^2\right]_k, \quad k>n_1.$$
Note that $n_0$ and $n_1$ are constructed so that $\max(2n_0,N_l)<n_1$. Thus, (for $n\geq n_1$) $$\alpha_{n+1}=\frac{A}{\Omega(n+1)}\sum_{k=1}^{n}\hat \alpha_k \hat \alpha_{n+1-k}.$$
Assume, for some $n\geq n_1$, we have proved that $\hat \alpha_k\leq CM^k$ holds for $n_0<k\leq n$. If we can prove that $\hat \alpha_{n+1}\leq CM^{n+1}$, the convergence of $\phi$ follows by induction.

$$\begin{array}{ccl}  
\alpha_{n+1} & = & \frac{A}{\Omega(n+1)}\left(\sum_{k=1}^{n_0}\hat \alpha_k \hat \alpha_{n+1-k}
+\sum_{k=n_0+1}^{n-n_0}\hat \alpha_k \hat \alpha_{n+1-k} \right.\\ 
& & + \left.\sum_{k=n-n_0+1}^{n}\hat \alpha_k \hat \alpha_{n+1-k}\right) \\
& = & \frac{A}{\Omega(n+1)}\left(2\sum_{k=1}^{n_0}\hat \alpha_k \hat \alpha_{n+1-k}
+\sum_{k=n_0+1}^{n-n_0}\hat \alpha_k \hat \alpha_{n+1-k}\right) \\ \\
& & \left(\textrm{use the induction hypothesis, }\hat \alpha_k\leq CM^k, \quad n_0<k \leq n\right)\\ \\
& \leq & \frac{A}{\Omega(n+1)}\left(2\sum_{k=1}^{n_0}\hat \alpha_k CM^{n+1-k}
+\sum_{k=n_0+1}^{n-n_0} C^2M^{n+1}\right) \\
& = &\frac{A}{\Omega(n+1)}\left(2\sum_{k=1}^{n_0}\hat \alpha_k M^{-k}+(n-2n_0)C\right)CM^{n+1} \\
& \leq & \frac{A}{\Omega(n+1)}\left(2\sum_{k=1}^{n_0}\hat \alpha_k M^{-k}+nC\right)CM^{n+1}
\end{array}$$
The expression before $CM^{n+1}$ is decreasing in $n$ for $n\geq n_1$, since $\Omega(n)\sim n|\check \lambda|$. Therefore, to prove the induction step, it suffices to prove that the expression is less than one for $n=n_1$. Thus, the close to identity change of variables converges to an analytic function on $|x|<r_\phi=s$.
\end{proof}

\subsection{Computing $G$ and its radius of convergence}
It was proved above that there exists an analytic change of variables $\phi$ on $|x|<r_\phi$. After changing the coordinates, the system is of the form 
$$\dot x = \Lambda x + G(x),$$ where $[G]_{\mathbb V_l}=0$. We want to estimate the radius of convergence of $G$.

Let $\hat \phi$ be as in the above section, then a majorant for $G$ is given by $\hat G$, defined by 
\begin{equation}\label{GR}
\hat g_k=[\hat F(x+\hat \phi^{k-1})+2(\hat \phi^{k+1-2l})'\hat G^{k-1}]_k,
\end{equation}
where we note that $\hat g_k=0$, for $0\leq k < 2l$, since $[\hat G]_{\mathbb V_l}=0$. The $\hat g_k$'s are computed using automatic differentiation.

We use (\ref{GR}) to compute $\hat g_k$, for $2l\leq k\leq N_G$, where $N_G$ is an integer larger than $2l$. These values are used to compute candidate constants $D$ and $K$, such that $\hat g_k\leq DK^k$. To do this, we again use our least squares estimator.
We require that $K>M$; the reason being that $\frac{1}{K}$ will be used to estimate the radius of convergence for $G$, and $G$ is only of interest within the radius of convergence of $\phi$.
Let $$\begin{array}{cccc}
\Psi(n) & := & & A\left(\left(2\sum_{k=1}^{n_0}\hat \alpha_k M^{-k}+C(n-2n_0)\right)\frac{C}{D}\left(\frac{M}{K}\right)^{n+1}\right) \\
& & + & 2\left(\sum_{k=2}^{n_0} k\hat \alpha_k K^{1-k} + CM \left(\frac{M}{K}\right)^{n_0}\frac{(n_0+1)-(n_0)\frac{M}{K}}{\left(1-\frac{M}{K}\right)^2}\right)
\end{array}$$

\begin{proposition}\label{GConvProp}
If $\Psi(N_G) < 1,$ then $G$ is analytic on $\mathfrak{B}_{{K}^{-1}}$.
\end{proposition}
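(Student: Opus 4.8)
The plan is to follow the proof of Proposition~\ref{phiConvProp} almost verbatim, now working with the single-variable majorant $\hat G$ of the components $G_s,G_u$ whose coefficients $\hat g_k$ satisfy the recursion~(\ref{GR}). The computed values $\hat g_k$ for $2l\le k\le N_G$, together with the least-squares fit of $D$ and $K$, supply the base case $\hat g_k\le DK^k$, and the whole content of the proposition is that the single scalar inequality $\Psi(N_G)<1$ pushes this bound through the inductive step.

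The first task is to check that $\hat G$ genuinely majorises $G_i$. Starting from the componentwise equation~(\ref{GRec}), one replaces $F_i$ by its majorant $\hat F$, each $\phi_i$ by $\hat\phi$ (so each $\partial\phi_i/\partial y_s$ and $\partial\phi_i/\partial y_u$ by $\hat\phi'$) and, inductively, $G_s$ and $G_u$ by $\hat G$, then passes from $\mathbb C^2$ to $\mathbb C$ by the two-step majorisation described in Section~2; discarding the filter $[\,\cdot\,]_{\mathbb U_l}$ only enlarges the series. Since $\hat F$ has order $\ge 2$, $\hat\phi'$ order $\ge1$ and $\hat G$ order $\ge2l$, the coefficient of $x^k$ on the right-hand side involves only $\hat\phi$ up to order $k-1$ (respectively $k+1-2l$) and $\hat G$ up to order $k-1$; this is precisely~(\ref{GR}), and it also explains why $\hat g_k=0$ for $k<2l$. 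That $G$ itself appears on the right of~(\ref{GRec}) is harmless: the coefficient of $x^k$ there depends only on coefficients of $G$ of order strictly less than $k$, so the recursion is well posed and the majorisation is proved by the same induction used for the coefficient bound.

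For the inductive step, assume $\hat g_k\le DK^k$ for $2l\le k\le n$ with $n\ge N_G$, and estimate $\hat g_{n+1}$ from~(\ref{GR}). By the lemma above, $\hat F(x)\le A|x|^2$ on $|x|<s$, so $[\hat F(x+\hat\phi^{n})]_{n+1}\le A\,[(x+\hat\phi^{n})^2]_{n+1}=A\sum_k\hat\alpha_k\hat\alpha_{n+1-k}$ with the convention $\hat\alpha_1=1$; splitting this sum at $n_0$ and $n-n_0$ and inserting $\hat\alpha_k\le CM^k$ for $k>n_0$, exactly as in Proposition~\ref{phiConvProp}, bounds it by $A\bigl(2\sum_{k=1}^{n_0}\hat\alpha_k M^{-k}+C(n-2n_0)\bigr)CM^{n+1}$, which, since $CM^{n+1}=\tfrac{C}{D}\bigl(\tfrac{M}{K}\bigr)^{n+1}DK^{n+1}$, is the first bracket of $\Psi(n)$ times $DK^{n+1}$. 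For the remaining contribution, $2[(\hat\phi^{n+2-2l})'\hat G^{n}]_{n+1}=2\sum_k k\hat\alpha_k\hat g_{n+2-k}$; split again at $n_0$: for $k\le n_0$ use $\hat g_{n+2-k}\le DK^{n+2-k}$ (legitimate since $2l\le n+2-k\le n$), and for $k>n_0$ use both $\hat\alpha_k\le CM^k$ and the induction hypothesis, then bound the resulting tail $\sum_{k>n_0}k(M/K)^k$ by its closed geometric form. Collecting the pieces gives $\hat g_{n+1}\le\Psi(n)\,DK^{n+1}$. Finally $\Psi(n)$ is non-increasing for $n\ge N_G$ --- the only $n$-dependent part is $(a+Cn)(M/K)^{n+1}$ with $a$ a fixed constant, and since $K>M$ the factor $(M/K)^{n+1}$ eventually dominates the linear one (which one ensures by taking $N_G$ large enough) --- so $\Psi(N_G)<1$ yields $\hat g_{n+1}\le DK^{n+1}$, closing the induction. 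Hence $\hat G$, and with it $G_s$ and $G_u$, converges on $|x|<K^{-1}$, i.e.\ $G$ is analytic on $\mathfrak B_{K^{-1}}$.

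I expect the genuine obstacle to be the second step: turning the self-referential, $\mathbb U_l$-filtered, two-variable equation~(\ref{GRec}) into the legitimate scalar recursion~(\ref{GR}) with the correct truncation orders, rather than the inductive estimate, which is a routine variant of Proposition~\ref{phiConvProp}. A smaller point still needing a word of care is the monotonicity of $\Psi$ on $[N_G,\infty)$: unlike in Proposition~\ref{phiConvProp}, where the decay came from the growing divisor bound $\Omega(n)\sim n|\check\lambda|$, here it rests entirely on the gap $K>M$, so one should record that $N_G$ is chosen large enough for the linear factor not to spoil it.
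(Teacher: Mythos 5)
Your proof of the inductive step follows the paper's argument exactly: the same bound $\hat g_{n+1}\le A\Sigma_1+2\Sigma_2$ read off from~(\ref{GR}), the same split of $\Sigma_1$ at $n_0$ and $n-n_0$ using $\hat\alpha_k\le CM^k$ on the middle block, the same split of $\Sigma_2$ at $n_0$ with the induction hypothesis applied to $\hat g_{n+2-k}$, and the same summation of the geometric tail, assembling into $\hat g_{n+1}\le\Psi(n)\,DK^{n+1}$. You add two things the paper leaves implicit, both worth having: first, you explain why $\hat G$ defined by the recursion~(\ref{GR}) is actually a majorant of $G_i$ starting from the filtered, self-referential two-variable equation~(\ref{GRec}) — the paper simply asserts it. Second, you correctly flag that the $n$-dependent piece of $\Psi$ has the shape $(a+Cn)(M/K)^{n+1}$, which is only \emph{eventually} decreasing; the paper's statement ``since $K>M$, the bound on $\Sigma_1$ is decreasing in $n$'' is not literally true for all $n$, and the argument really requires $N_G$ large enough that $C<(a+CN_G)\log(K/M)$. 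Making that condition explicit (or simply checking $\Psi(n)\le\Psi(N_G)$ at a single additional index where the derivative changes sign) would tighten a small gap in the original.
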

\begin{proof}
Assume that we have proved $\hat g_k\leq DK^k$, for $k\leq n$. If we can prove that $\hat g_{n+1}\leq DK^{n+1} $, the convergence of $G$ follows by induction. As in the proof of Proposition \ref{phiConvProp}, we use the constant $A$ to get a bound on $\hat F$, which gives us the bound
$$\hat g_{n+1} \leq A\sum_{k=1}^n \hat \alpha_k \hat \alpha_{n+1-k}+2\sum_{k=2}^{n+2-2l}k\hat \alpha_k \hat g_{n+2-k}.$$
We call the first sum $\Sigma_1$, and the second sum $\Sigma_2$. If we can prove that $A\Sigma_1+2\Sigma_2$ is bounded by $\Psi(n) DK^{n+1}$, where $\Psi: \mathbb{N}\to \mathbb{R}$ is a decreasing function, we are done.

$$\begin{array}{lll}
\Sigma_1 &\leq& \sum_{k=1}^{n_0}\hat \alpha_k CM^{n+1-k}+\sum_{n_0+1}^{n-n_0}C^2M^{n+1}+\sum_{n-n_0+1}^n CM^k \hat \alpha_{n+1-k} \\
 &=& \left(2\sum_{k=1}^{n_0}\hat \alpha_k M^{-k}+C(n-2n_0)\right)CM^{n+1} \\
 &\leq&\left(\left(2\sum_{k=1}^{n_0}\hat \alpha_k M^{-k}+C(n-2n_0)\right)\frac{C}{D}\left(\frac{M}{K}\right)^{n+1}\right)DK^{n+1},
\end{array}
$$
since $K>M$, the bound on $\Sigma_1$ is decreasing in $n$.

$$\begin{array}{lll}
\Sigma_2 & \leq & D\left(\sum_{k=2}^{n_0} k\hat \alpha_k K^{n+2-k} + C\sum_{k=n_0+1}^{n+2-2l}kM^kK^{n+2-k}\right) \\
 & \leq & \left(\sum_{k=2}^{n_0} k\hat \alpha_k K^{1-k} + CM\sum_{k=n_0+1}^{\infty}k\left(\frac{M}{K}\right)^{k-1}\right)DK^{n+1} \\
& = & \left(\sum_{k=2}^{n_0} k\hat \alpha_k K^{1-k} + CM \left(\frac{M}{K}\right)^{n_0}\frac{(n_0+1)-(n_0)\frac{M}{K}}{\left(1-\frac{M}{K}\right)^2}\right)DK^{n+1}
\end{array}
$$
the expression in front of $DK^{n+1}$ is independent of $n$. Thus, $\Psi$ is a decreasing function, which proves the bound $\hat g_k\leq DK^{k}$ for all $k$. The analyticity of $G$ on $r_1:=\frac{1}{K}$ follows.
\end{proof}

\subsection{Computing the bounds}
Let, $r_0=\epsilon_\phi r_\phi,$ $r_2=\epsilon_G r_1,$ and $r_3=\epsilon \min(r_0,r_2),$ 
where $0<\epsilon_\phi, \epsilon_G,\epsilon<1$, are given numbers.

That is, $\mathfrak{B}_{r_0}$ is the domain of $\phi$ that we will use to estimate $||\phi||$ and $||\phi^{-1}||$, and $\mathfrak{B}_{r_2}$ is the domain of $G$ that we will use to estimate $\kappa$. To ensure that our estimates hold we may never leave these domains. 
$\mathfrak{B}_{r_3}$ is the box where we will actually change coordinates.

To guarantee that the change of variables is done in the domain of $G$, we need that $r_3+r_3^2 K_0 < r_2$. By construction, the flow stays inside the domain of $G$, since the only place that the flow can leave the box is on the unstable side. To guarantee that the final change of coordinates is done in the part of the domain of $\phi$, where the estimate on $||\phi^{-1}||$ holds, we need that $r_3<r_0(1-K_0r_0)$, since then $r^*<r_0$, where $r^*$ is such that $r_3=r^*-||\phi||_{r^*}$. 

To compute $K_0$ we note that on $|x|<r_0$, we have that 
$$\begin{array}{lllll}
||\phi||_r & \leq & \hat \phi(r) & = & \sum_{k=2}^{n_0} \hat \alpha_k r^k+C\sum_{k=n_0+1}^\infty M^k r^k \\
& & & \leq & r^2\left(\sum_{k=2}^{n_0}\hat \alpha_kr_0^{k-2}+CM^2\sum_{k=n_0-1}^\infty M^kr_0^k\right) \\
& & & \leq & r^2\left(\sum_{k=2}^{n_0}\hat \alpha_kr_0^{k-2}+CM^2\frac{(Mr_0)^{n_0-1}}{1-Mr_0}\right).
\end{array}
$$
Thus, if we put
\begin{equation}\label{K0}
K_0:=\sum_{k=2}^{n_0}\hat \alpha_kr_0^{k-2}+CM^2\frac{(Mr_0)^{n_0-1}}{1-Mr_0},
\end{equation}
then  $$||\phi||_{r}\leq K_0r^2.$$

To estimate $||\phi^{-1}||_r$, we need to find $r^*$ such that $r=r^*-||\phi||_{r^*}$ since then, by Lemma \ref{phiInvN}, 
$||\phi^{-1}||_{r}\leq||\phi||_{r^*}$. 
A trivial calculation yield 
$$r^*=\frac{1}{2K_0}-r-\sqrt{\frac{1}{4K_0^2}-\frac{r}{K_0}}.$$
Thus, 
\begin{equation}\label{phiInv}
||\phi^{-1}||_{r}\leq K_0 (r^*)^2 \leq \frac{1}{2K_0}-r-\sqrt{\frac{1}{4K_0^2}-\frac{r}{K_0}}.
\end{equation}

Finally, the constant $\kappa$ is computed as
$$\kappa:=\frac{DK^{2l}}{1-Kr_2}r_2^{2l-1}.$$
We want that $\kappa\ll\min(-\lambda_s,\lambda_u,|\lambda_s+\lambda_u|)$; if this is not the case, we decrease $r_2$ and/or $r_3$.

\section{Examples}
\subsection{Example 1}
We start with a simple example that also illustrates how the results depend on the distance from resonance. The vector field under study is 
\begin{equation}\label{Ex1Eq}
\begin{array}{ccl} 
\dot x_s & = & -x_u \\
\dot x_u & = & x_s^3+0.05x_s^2-0.95x_s\\ & + & \delta((438.4905-25.2469x_s-452.7899x_s^2)x_u-741.0341x_u^3/3)
\end{array}
\end{equation}
which has previously been examined in \cite{JT08}. It is a perturbation of a Hamiltonian system, given by $\delta=0$. The Hamiltonian system has a resonance of flat-order $1$, since at a saddle of a planar Hamiltonian vector field the stable and unstable eigenvalues have the same modulus. 

We describe the results in detail for $\delta=10^{-3}$, and also include Table \ref{etaDep}, which illustrates how the convergence radii, and norm bounds depend on the distance from the resonance. We have chosen $l=10$, since this is the lowest value of $l$ that, after optimisation of $\epsilon_\phi = 0.1, \epsilon_G = 0.5,$ and $\epsilon =  0.9$, yields $\kappa<2^{-53}$, which is the machine precision using IEEE double precision floating point arithmetic.

We start by introducing the linear change of variables, $x=T\xi$, that transforms (\ref{Ex1Eq}) to the form (\ref{DEq}). Note that this transformation yields interval enclosures of the eigenvalues 
$$\lambda_s=-0.77978852302649^{94}_{89}, \quad \lambda_u=1.21827902302649_{88}^{95},$$ 
which are used during the computations. The diagonalised system is put into the algorithm. 

The algorithm starts by verifying that $\Lambda \in \mathcal F_{10},$ and computes $N(10) = 25$. Using $\mu=0.2$, and $\eta=0.08$, yields $n_0 = 13$, and $n_1 = 30$. Therefore, we need to internally represent all functions by their Taylor models of order $30$. 
Next, $\phi_u^{30}$, and $\phi_s^{30}$ are computed using the recursive formula (\ref{compAlpha}), and used to compute $\hat \phi^{30}$. The least squares estimator of the coefficients of $\hat \phi^{30}$ yields 
$$\hat \alpha_k \leq 0.08\times 397^k, \quad n_0<k\leq n_1,$$
i.e. $C = 0.08$, and $M = 397$. We compute $\hat F^{30}$, and use $s=\frac{1}{M}$ as the candidate radius of convergence to compute $A=1.64$. To prove that $\phi$ converges we verify Proposition \ref{phiConvProp}. This yields $r_\phi = 2.52\times 10^{-3}$, $r_0=2.52 \times 10^{-4}$, and Equation \ref{K0} gives $K_0 = 22.6$.

The algorithm now turns to the majorisation of $G$. We compute $\hat g_k$, for $2l \leq k \leq 4l$, using the recursive formula (\ref{GR}). The least squares estimator of the coefficients of $\hat G$, yields 
$$\hat g_k \leq 1.03 \times10^{-18} \times 2490^k, \quad 2l\leq k\leq 4l,$$
i.e. $D = 1.03 \times10^{-18}$, and $K=2490^k$. These values are used to verify Proposition \ref{GConvProp}. This yields $r_1 = 4.02\times 10^{-4}$, 
$r_2 = 2.01\times10^{-4}$, $r_3 = 1.81\times 10^{-4}$, and $\kappa = 9.74\times 10^{-21}$.

Finally, we verify that we compute within the domains of validity of the constants $K_0$, and $\kappa$. When we enter the box $|\xi|<r_3$, we apply $\phi$, which alters the coefficient by at the most $K_0 r_3^2$, that is we might start computing with $y_u = (1+K_0 r_3)r_3 < 1.82\times 10^{-4}$. Inequality \ref{thm2} gives the bound, $y_s=\psi(y, \tau_e(y) \leq (1+K_0 r_3)r_3 
\left(\frac{(1+K_0 r_3)r_3}{r_3}\right)^{\frac{|\lambda_s|+\kappa}{\lambda_u-\kappa}} \leq 1.84\times 10^{-4}$. We use Equation \ref{phiInv}, 
and compute $||\phi^{-1}||_{y_s} \leq \frac{1}{2K_0}-y_s-\sqrt{\frac{1}{4K_0^2}-\frac{y_s}{K_0}} \leq 8\times 10^{-7}$. 
Thus, the flow exits the computations inside of the box $|\xi|<1.85\times 10^{-4}$, which is inside of $|\xi|<\min(r_0, r_2, r_0(1-K_0r_0))=r_2=2.01\times10^{-4}$, where the bounds on $K_0$, $\kappa$, and $||\phi^{-1}||$ are valid.

\begin{table}[h]
\begin{center}
%\begin{footnotesize}
\begin{tabular}{c|ccccc}
$\delta$ & $r_\phi$ & $r_1$  & $r_3$ & $K_0$ & $\kappa$ \\ \hline \\
$10^{-3}$ & $2.52\times 10^{-3 }$ & $ 4.02\times 10^{-4 }$ & $1.81\times 10^{-4 }$ & $ 22.6$ & $ 9.74\times 10^{-21}$ \\
$10^{-5}$ & $6.85\times 10^{-2 }$ & $1.35\times 10^{-2 }$ & $6.09\times 10^{-3 }$ & $ 6.52$ & $ 7.17\times 10^{-19}$ \\
$10^{-7}$ & $9.97\times 10^{-3}$ & $ 1.46\times 10^{-3 }$ & $ 6.59\times 10^{-4}$ & $ 7.45\times 10^{+1 }$ & $ 3.15\times 10^{-20}$ \\
$10^{-9}$ & $1.14\times 10^{-3}$ & $ 1.46\times 10^{-4 }$ & $ 6.59\times 10^{-5}$ & $ 8.42\times 10^{+2 }$ & $ 3.13\times 10^{-21}$ \\
$10^{-11}$ & $1.30\times 10^{-4 }$ & $1.46\times 10^{-5 }$ & $6.59\times 10^{-6 }$ & $ 9.67\times 10^{+3}$ & $ 3.14\times 10^{-22}$ \\
$10^{-13}$ & $ 1.21\times 10^{-5}$ & $1.45\times 10^{-6 }$ & $6.53\times 10^{-7 }$ & $8.94\times 10^{+4 }$ & $4.46\times 10^{-23}$  \\
$10^{-15}$ & $1.07\times 10^{-6 }$ & $ 1.28\times 10^{-7}$ & $ 5.79\times 10^{-8}$ & $ 7.90\times 10^{+5}$ & $ 1.37\times 10^{-22}$ \\
$10^{-17}$ & $ 8.62\times 10^{-8}$ & $1.11\times 10^{-8 }$ & $5.02\times 10^{-9 }$ & $ 8.28\times 10^{+6}$ & $ 3.08\times 10^{-20}$ \\
\end{tabular}
\vspace{0.1in}
\caption{Convergence radii and norm estimates in Example 1 as $\delta$ is varied.}\label{etaDep}
\end{center}
\end{table}

\subsection{Example 2}
In our second example, we follow a solution curve close to a graphic. A graphic is an invariant set of a flow consisting of saddles and separatrices, see e.g. \cite{R98}. 
Consider the following vector field
\begin{equation}\label{ex2}
\begin{array}{ccl}
\dot x & = & (\delta x+y)(x^2 - 1) \\
\dot y & = & (-x+\delta y)(y^2 - 1)
\end{array},
\end{equation}
where we will consider $\delta= -0.2$. If $\delta = 0$, this is a Hamiltonian field with the first integral $H=\frac{-y^2x^2+x^2+y^2}{2}$. There are five critical points, an unstable focus [centre if $\delta=0$] at the origin and four saddles at $(\pm 1,\pm 1)$, see Figure \ref{graphicPic}. This example is simple enough so that we can determine most qualitative properties by hand, which allows us to focus our attention to the application of our algorithm.

\begin{figure}[]
\begin{center}
\includegraphics[width=0.5\textwidth]{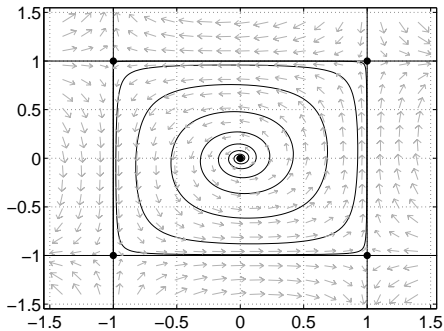}
\caption{Phase portrait of the system from Example 2.}\label{graphicPic}
\end{center}
 \end{figure}

The curves $x=\pm 1$ and $y=\pm 1$ are invariant under the flow of (\ref{ex2}). In fact, they are the separatrices of the saddles. We only consider the flow inside of the graphic. For $\delta = 0$, $H=0$ corresponds to the origin, and $H=\frac{1}{2}$ to the graphic. To determine the properties of the flow of (\ref{ex2}), we begin by noting that, for $\delta<0$, the vector field is transversal to the solution curves of the unperturbed system. Indeed, let $r^2 = \frac{-y^2x^2+x^2+y^2}{2}$, then 
$$ \begin{array}{ccl}
2r\dot r & = & -y^2x\dot x-yx^2\dot y+y\dot y+x\dot x \\  
& = & (-y^2x+x)(\delta x+y)(x^2-1)+(-yx^2+y)(-x+\delta y)(y^2-1) \\
& = & \delta(x(x^2-1)(-y^2x+x)+y(y^2-1)(-yx^2+y)) \\
& = & \delta(x^2(-y^2x^2+x^2)+y^2(-y^2x^2+Y^2)-2r^2+x^2y^2) \\
& = & \delta(2r^2(x^2+y^2)-2r^2-x^2y^2)=\delta(x^2+y^2)(2r^2-1) \\
& > & 0
\end{array}
$$

Thus, if we leave the neighbourhood of one saddle on $H=C$, we enter the neighbourhood of the next one outside of $H=C$. It follows that we do not need any numerical integrator to estimate the distance to the graphic from above. We will start in a neighbourhood of a saddle, use our computed analytical estimates to pass it and then enter at the next one at the same level curve of $H$. In addition we only need to consider one of the saddles, since the system is symmetric. We therefore translate $(-1,-1)$ to the origin and get the system:
\begin{equation}
\begin{array}{ccl}
\dot x_s & = & -2.2x_s-x_ux_s^2-0.1x_s^3+2x_ux_s+1.3x_s^2 \\
\dot x_u & = & 1.8x_u-0.1x_u^3+x_u^2x_s-0.7x_u^2-2x_ux_s
\end{array},
\end{equation}

Our program yields the output shown in Table \ref{ex2Out}.
\begin{table}[h]
\begin{small}
\begin{verbatim}
Resonance of order 9 detected
l = 9, N_l = 19
Order of Taylor approximations = 23
n_0 = 10, n_1 = 23
C = [0.0180,0.0181]  M= [3.9853,3.9854]
A = [4.5520,4.5521]
Phi is analytic on the disk with radius = [0.2509, 0.2510]
K_0 <= [2.3223,2.3224] on the disk r_0 = [0.0376,0.0377]
D = [1.2701E-010,1.2702E-010] K = [12.5616,12.5617]
G is analytic on the disk with radius = [0.0796,0.0797]
kappa <= [1.5544E-017,1.5545E-017] on the disk r_2 = [0.0262,0.0263]
We recommend that you change to the normal form
on the disk with radius r_3 = [0.0210,0.0211]
\end{verbatim}
\end{small}
\caption{The output generated by the program in example 2.}\label{ex2Out}
\end{table}

We will change to the normal form on $\mathfrak B_{0.02}$, i.e. $r=0.02$, and consider the trajectory that starts at $(x_s, x_u) = (0.02, 0.01)$ in the translated coordinate system. By Theorem \ref{thm2}, together with the bounds on $K_0$ and $\kappa$, we can calculate where it will leave $\mathfrak B_{0.02}$. We start the calculation with $y_i<(1+K_0 x_i)x_i$, and then use our bounds on the flow inside $\mathfrak{B}_{0.02}$, to get the following bound at $y_u=0.02$
$$y_s\leq (1+K_0 r)r \left(\frac{(1+K_0 x_u)x_u}{r}\right)^{\frac{|\lambda_s|-\kappa}{\lambda_u+\kappa}}.$$ 
Thus, on the outgoing stable coordinate we have the following bound,
$$x_s\leq (1+K_0 r)r \left(\frac{(1+K_0 x_u)x_u}{r}\right)^{\frac{|\lambda_s|-\kappa}{\lambda_u+\kappa}}+||\phi^{-1}||.
$$
By the transversality and symmetry properties of the system, we enter the neighbourhood of the next saddle outside of $(r,x_s)$, and so on. If we follow our trajectory we get the upper bounds on its distance from the graphic, and lower bounds on the lap times, shown in Table \ref{graphConv}.

\begin{table}[h]
\begin{center}
%\begin{footnotesize}
\begin{tabular}{c|lc}
$lap$ & $x_u$ & $laptime$ \\ \hline
0 & 0.01 & 0 \\
1 & $7.1 \times 10^{-3}$ & 1.7\\
2 & $3.0 \times 10^{-3}$ & 2.8\\
3 & $3.8 \times 10^{-4}$ & 5.6\\
4 & $3.7\times 10^{-6}$ & 12\\
5 & $1.2\times10^{-10}$& 26\\
6 & $3.0\times 10^{-17} $& 58\\

\end{tabular}
\caption{Converging to the graphic.}\label{graphConv}
\end{center}
\end{table}

To compare with the performance of a standard numerical integrator we do the same computations using the {\tt ode45} solver in MATLAB, which incorrectly starts fluctuating around $x_u=10^{-7}$, the result is shown in Table \ref{graphConvNum}.

\begin{table}[h]
\begin{center}
%\begin{footnotesize}
\begin{tabular}{c|lc}
$lap$ & $x_u$ & $laptime$ \\ \hline
0 & 0.01 & 0 \\
1 & $3.9 \times 10^{-7} $& 23 \\
2 & $8.0 \times 10^{-8} $& 38 \\
3 & $1.3 \times 10^{-7} $& 39 \\
4 & $7.9 \times 10^{-8} $& 39 \\
5 & $9.8 \times 10^{-8} $& 39 \\
6 & $1.3 \times 10^{-7} $& 39 \\
\end{tabular}
\caption{Numerical integration close to the graphic.}\label{graphConvNum}
\end{center}
\end{table}
\begin{small}

\end{small}
\end{document}